\newtheorem{theorem}{Theorem}[section]
\newtheorem{proposition}[theorem]{Proposition}
\newtheorem{lemma}[theorem]{Lemma}
\newtheorem*{claim*}{Claim}
\newtheorem{corollary}[theorem]{Corollary}
\newtheorem{Main Conjecture}[theorem]{Main Conjecture}
\theoremstyle{definition}
\newtheorem{definition}[theorem]{Definition}
\theoremstyle{remark}
\newtheorem{example}[theorem]{Example}
\newtheorem{remark}[theorem]{Remark}
\theoremstyle{plain}
\newcommand\complexes{{\mathbb C}}
\newcommand\integers{{\mathbb Z}}
\newcommand\Groth{{\mathfrak G}}
\newcommand\groth{{\mathfrak G}}
\newcommand\reg{{\mathrm{reg}}}
\newcommand\post{{\mathrm{post}}}
\newcommand\init{{\mathrm{init}}}
\newcommand{\cellsize}{18}
\newlength{\cellsz} \setlength{\cellsz}{\cellsize\unitlength}
\newsavebox{\cell}
\sbox{\cell}{\begin{picture}(\cellsize,\cellsize)
\put(0,0){\line(1,0){\cellsize}}
\put(0,0){\line(0,1){\cellsize}}
\put(\cellsize,0){\line(0,1){\cellsize}}
\put(0,\cellsize){\line(1,0){\cellsize}}
\end{picture}}
\newcommand\cellify[1]{\def\thearg{#1}\def\nothing{}%
\ifx\thearg\nothing
\vrule width0pt height\cellsz depth0pt\else
\hbox to 0pt{\usebox{\cell} \hss}\fi%
\vbox to \cellsz{
\vss
\hbox to \cellsz{\hss$#1$\hss}
\vss}}
\newcommand\tableau[1]{\vtop{\let\\\cr
\baselineskip -16000pt \lineskiplimit 16000pt \lineskip 0pt
\ialign{&\cellify{##}\cr#1\crcr}}}
\newcommand{\excise}[1]{}
\begin{document}
\pagestyle{plain}
\title{Schubert determinantal ideals are Hilbertian}
\author{Ada Stelzer}
\author{Alexander Yong}
\address{Dept.~of Mathematics, U.~Illinois at Urbana-Champaign, Urbana, IL 61801, USA} 
\email{astelzer@illinois.edu, ayong@illinois.edu}
\date{June 28, 2023}

\begin{abstract}
Abhyankar defined an ideal to be \emph{Hilbertian} if its Hilbert polynomial coincides with its Hilbert function
for all nonnegative integers. In 1984, he proved that the ideal of $(r+1)$-order minors of a generic $p\times q$ matrix is Hilbertian. We give a different proof and
a generalization to the \emph{Schubert determinantal ideals} introduced by Fulton in 1992. Our proof reduces to a simple upper bound for the Castelnuovo-Mumford regularity of these ideals. We further indicate the pervasiveness of the Hilbertian property in Schubert geometry.
\end{abstract}

\maketitle

\vspace{-.1in}

\section{Introduction}

\subsection{History and motivation}\label{sec:motivation}
Fix an $r$-dimensional vector space
$V$ over ${\mathbb C}$. $GL(V)$ acts on the space $V^{\oplus p}\oplus (V^*)^{\oplus q}$ of $p$ vectors and $q$ covectors. Hence it acts on 
${\mathbb C}[V^{\oplus p}\oplus (V^*)^{\oplus q}]$. The \emph{first fundamental theorem of invariant theory
for $GL(V)$} states that the invariant ring ${\mathbb C}[V^{\oplus p}\oplus (V^*)^{\oplus q}]^{GL(V)}$ is generated
by \emph{contractions} $X_{ij}$ where 
$X_{ij}(v_1,\ldots,v_p; \phi_1,\ldots,\phi_q)=\phi_j(v_i)$.
The \emph{second fundamental theorem of invariant theory for $GL(V)$} gives the (first) syzygies between the 
contractions, i.e., it asserts a $\complexes$-algebra isomorphism 
\[R_{r,p,q}:={\mathbb C}[x_{ij}:1\leq i\leq p, 1\leq j\leq q]/I_{r,p,q} \cong {\mathbb C}[V^{\oplus p}\oplus (V^*)^{\oplus q}]^{GL(V)}\]
induced by the map $x_{ij}\mapsto X_{ij}$,
where $I_{r,p,q}$ is the ideal of $(r+1)\times (r+1)$ minors of a $p\times q$ matrix.\footnote{Better yet, one has a minimal free resolution of $R_{r,p,q}$; see work of Lascoux \cite{Las:res} and Weyman \cite{Weyman}.} In this way, the \emph{determinantal variety} ${\mathfrak X}_{r,p,q}$ 
defined by $I_{r,p,q}$ is connected to invariant theory. A vector space basis of ${\mathbb C}[V^{\oplus p}\oplus (V^*)^{\oplus q}]^{GL(V)}$ and hence of $R_{r,p,q}$ was given with
\emph{Young bitableaux} by Doubilet--Rota--Stein \cite{Doubilet}. This basis and its \emph{straightening law} were further explained  by De Concini--Procesi
\cite{DeConciniProcesi}, and used to study determinantal varieties \emph{per se} by 
De Concini--Eisenbud--Procesi \cite{DEP}. These determinantal varieties were shown to be open neighborhoods of certain Schubert varieties by Lakshmibai--Seshadri \cite{LS78} (see the survey \cite{WY} and the references therein). 

Abhyankar studied the Hilbert function of 
$R_{r,p,q}$. In \cite[Theorem 5]{Abh84} he gave a formula for the function and used it to prove  that $I_{r,p,q}$ is \emph{Hilbertian}, that is, the Hilbert function agrees with the
Hilbert polynomial for all nonnegative integers rather than merely in the long run. 
Abhyankar--Kulkarni \cite[Section~4, Main Theorem]{AK} gave a generalization of this result to \emph{ladder determinantal ideals}.
See Ghorpade's survey \cite{Ghorpade} on Abhyankar's work for further elaboration and references.

\subsection{Schubert determinantal ideals}
We give a different proof of Abhyankar's Hilbertian theorem \cite[Theorem~5]{Abh84}, together with a new generalization to \emph{matrix Schubert varieties}. This work complements the aforementioned Abhyankar-Kulkarni theorem \cite{AK} as well as recent work on the  regularity of matrix Schubert varieties due to Rajchgot--Ren--Robichaux--St.~Dizier--Weigandt \cite{Robichaux1}, Rajchgot--Robichaux--Weigandt \cite{Robichaux2} and Pechenik--Speyer--Weigandt \cite{PSW}.

Let ${\sf Mat}_{n}$ be the space of $n\times n$ matrices with entries in ${\mathbb C}$. Let $GL_n\subseteq {\sf Mat}_n$ be the group of invertible matrices with Borel subgroup $B$ of upper triangular matrices and opposite Borel $B_-$ of lower triangular matrices. Now, $B_{-}\times B$ acts on ${\sf Mat}_{n}$ by $(b_{-},b)\cdot M=b_{-}Mb^{-1}$. If $w$ is a permutation in the symmetric group $S_n$ on $[n]:=\{1,2,\ldots,n\}$, let $M_w$ be its permutation matrix with $1$'s in positions $(i,w(i))$ and $0$'s elsewhere.

\begin{definition}[\cite{Fulton:duke, Knutson.Miller}]
    The \emph{matrix Schubert variety} ${\mathfrak X}_w$ is the $B_{-}\times B$-orbit closure of $M_w$ in ${\sf Mat}_{n}$.\footnote{The nomenclature is justified as follows: ${\mathfrak X}_w$ is Zariski closure of $\pi^{-1}(X_w)$ in ${\sf Mat}_n$, where $\pi:GL_n\to GL_n/B$ is the natural projection to the flag variety $GL_n/B$ and $X_w\subseteq GL_n/B$ is a Schubert variety.} Its coordinate ring is denoted $R_w$.
\end{definition}

The \textit{Schubert determinantal ideal} $I_w\subseteq\complexes[{\sf Mat}_n]$ is the defining ideal of ${\mathfrak X}_w$ (see Section~\ref{section:strengthening}). 

In general, suppose $I\subset S={\mathbb C}[x_1,x_2,\ldots,x_N]$ is a homogeneous ideal. Then  $R:=S/I$ has the graded decomposition
\[R=\bigoplus_{k\geq 0} R_k.\]
The \emph{Hilbert function} is defined by $HF_R(k)=\dim_{\mathbb C}(R_k)$. For sufficiently large values of $k$ the values $HF_R(k)$ match those of a polynomial, called the \emph{Hilbert polynomial} $HP_R(k)$. 
\begin{definition}[\cite{Abh84}]
    $R = S/I$ is \emph{Hilbertian} if $HF_R(k)=HP_R(k)$ for all $k\in {\mathbb Z}_{\geq 0}$. If $HF_R(k)=HP_R(k)$ for all $k\geq 1$ then we say $R$ is \emph{almost Hilbertian}.\footnote{The name ``Hilbertian'' is credited to \cite{Abh84} in \cite{AK} but does not actually appear in the former article of Abhyankar prepared by Galligo.}
\end{definition}

The first version of our main theorem for Schubert determinantal ideals is as follows:

\begin{theorem}\label{thm:main}
    $R_w$ is Hilbertian for any $w\in S_n$.
\end{theorem}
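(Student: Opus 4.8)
The plan is to reduce the Hilbertian property to an upper bound on the Castelnuovo--Mumford regularity of $R_w$, exactly as the abstract advertises. Write $S=\complexes[{\sf Mat}_n]$, $d=\dim R_w = n^2-\ell(w)$, and let $\mathfrak{m}$ be the irrelevant ideal. Two facts about $R_w$ will drive everything: first, that $R_w$ is Cohen--Macaulay, and second, Fulton's theorem that $I_w$ is generated by the minors attached to the essential set of $w$. Granting Cohen--Macaulayness, the Grothendieck--Serre formula collapses to $HF_{R_w}(k)-HP_{R_w}(k)=(-1)^d\dim_\complexes H^d_{\mathfrak{m}}(R_w)_k$, since all lower local cohomology vanishes. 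The right-hand side is zero precisely when $k$ exceeds the $a$-invariant $a(R_w)$, and for a Cohen--Macaulay ring $a(R_w)=\reg(R_w)-d$. Thus $R_w$ is Hilbertian if and only if $\post(R_w)=a(R_w)<0$, i.e.\ if and only if $\reg(R_w)\le d-1$. This is the promised reduction.

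It remains to prove the regularity bound $\reg(R_w)\le \dim R_w-1$. For a Cohen--Macaulay graded ring one has $\reg(R_w)=\deg h(t)$, where $HS_{R_w}(t)=h(t)/(1-t)^d$ is the Hilbert series in lowest terms; so it suffices to show $\deg h\le d-1$. The idea is to read off $h$ after a Gröbner degeneration. Using the antidiagonal term order, the initial ideal $\operatorname{in}(I_w)$ is squarefree (Knutson--Miller), hence the Stanley--Reisner ideal of a simplicial complex $\Delta_w$ on the $n^2$ matrix variables, and the degeneration preserves the Hilbert series, so $h$ is the $h$-polynomial of $\Delta_w$. For any simplicial complex of Krull dimension $d$ the $h$-polynomial has degree at most $d$ for trivial reasons, and its top coefficient equals $(-1)^{d-1}\tilde{\chi}(\Delta_w)$.

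The final step is to observe that $\Delta_w$ is a cone, which forces $\tilde{\chi}(\Delta_w)=0$ and hence $\deg h\le d-1$. Here Fulton's essential-set description enters: every essential box of $w$ lies strictly inside the matrix (the last row and last column of the Rothe diagram are empty), so the minors generating $I_w$, and therefore the squarefree generators of $\operatorname{in}(I_w)$, involve none of the $2n-1$ variables in the last row and last column. Each such variable divides no generator of $\operatorname{in}(I_w)$, hence is a cone point of $\Delta_w$, so $\Delta_w$ is a cone and its reduced Euler characteristic vanishes. Combining the steps, $\reg(R_w)=\deg h\le d-1=\dim R_w-1$, whence $a(R_w)<0$ and $R_w$ is Hilbertian.

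The two inputs I would treat as the crux are the Cohen--Macaulayness of $R_w$ and the squarefreeness of the antidiagonal initial ideal; both are theorems I would cite rather than reprove. Granting them, the only genuine work is the cone observation, and the main thing to be careful about is that it yields a \emph{strict} inequality $\reg(R_w)<\dim R_w$, since equality would place a nonzero graded piece of $H^d_{\mathfrak{m}}(R_w)$ in degree $0$ and break the conclusion. An alternative to the cone argument, should one wish to avoid the initial ideal, is to bound the regularity through the Grothendieck polynomial: by Pechenik--Speyer--Weigandt $\reg(R_w)=\deg\groth_w-\ell(w)$, so the bound becomes $\deg\groth_w<n^2$, which follows from the known degree bounds on $\groth_w$ for $w\in S_n$.
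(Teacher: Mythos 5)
Your proof is correct; it shares the paper's reduction but establishes the key bound by a genuinely different argument. Like the paper, you use Fulton's theorem that $R_w$ is Cohen--Macaulay to get $\post(R_w)=\reg(R_w)-\dim R_w$ (the paper quotes \cite[Proposition~4.1.12]{BH}; your local-cohomology/$a$-invariant derivation is the same fact), so everything comes down to $\reg(R_w)\le\dim R_w-1$. The paper then writes $K_{R_w}(t)=\Groth_w(1-t)$ (Knutson--Miller) and proves $\deg\Groth_w\le|\lambda(w)|$, with equality exactly for dominant $w$, by induction on $|\lambda(w)|$ via Lascoux's transition formula (Proposition~\ref{prop:thebound}); since $\lambda(w)$ avoids the last row and column, this forces $\post(R_w)<0$. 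You instead pass to the squarefree antidiagonal initial ideal and use the same geometric input --- $D(w)$, hence $E(w)$, misses row $n$ and column $n$, so no generator of $\operatorname{in}(I_w)$ involves those $2n-1$ variables --- to see that the pipe dream complex is a cone, concluding $\deg h\le d-1$ from $h_d=(-1)^{d-1}\tilde\chi(\Delta_w)=0$. Your route avoids the transition formula and any induction, and is close in spirit to the paper's second (pipe dream) proof of Proposition~\ref{prop:thebound} and to Remark~\ref{remark:simpler}; what it gives up is sharpness, since you only obtain $\deg\Groth_w\le n^2-1$ rather than $\deg\Groth_w\le|\lambda(w)|$ with the equality case identified, which is what the strengthened Theorem~\ref{thm:strongermain} requires (a cone point need not exist inside the effective region, so your argument does not directly extend there). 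One attribution fix for your closing aside: the identity $\reg(R_w)=\deg\Groth_w-\ell(w)$ is due to Rajchgot--Ren--Robichaux--St.~Dizier--Weigandt \cite{Robichaux1}, not Pechenik--Speyer--Weigandt.
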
 

The question of when the Hilbert function and polynomial of $R = S/I$ begin to agree is answered by computing the \emph{(Castelnuovo-Mumford) regularity} $\reg(R)$. Recent work has established excellent combinatorial comprehension of this statistic for coordinate rings of matrix Schubert varieties. Previously, Knutson--Miller \cite{Knutson.Miller}
gave a formula for the Hilbert series of $R_w$;  the numerator is a \emph{Grothendieck polynomial} ${\mathfrak G}_w$. Using this,
\cite{Robichaux1}, Rajchgot--Ren--Robichaux--St.~Dizier--Weigandt made the fruitful observation that
\begin{equation}\label{eqn:RRRSW}
    \deg(\Groth_w)=\reg(R_w)+\mathrm{codim}(\mathfrak{X}_w);
\end{equation}
we use this idea in our proof. They gave a combinatorial rule for this regularity in the case where $w$ is Grassmannian (has a single descent). Rajchgot--Robichaux--Weigandt \cite{Robichaux2} generalized this to the case that $w$ is $2143$-avoiding or is $1423$-avoiding. Pechenik--Speyer--Weigandt \cite{PSW} gave a rule for regularity for general $w\in S_n$. 

As we shall explain, the regularity of $R_w$ (or rather, its \emph{postulation number}) is so small that its precise value is not needed for proving the Hilbertian property. Instead, we use 
a weak upper bound on the degree of a Grothendieck polynomial (Proposition~\ref{prop:thebound}). We give a short proof of this bound, \emph{ab initio}, from the ``graphical'' formulation \cite{Knutson.Yong} of Lascoux's transition formula for Grothendieck polynomials \cite{Lascoux:transition}. See also Remark~\ref{remark:simpler}. 

Strictly speaking, the family of varieties ${\mathfrak X}_{r,p,q}$ is \emph{not} a subfamily of the matrix Schubert varieties. Each ${\mathfrak X}_{r,p,q}$ is only equal to some ${\mathfrak X}_{w}$ up to a Cartesian product with affine space. While such Cartesian products do not change homological invariants, including regularity (see Proposition~\ref{prop:regstable}), they can affect the Hilbertian property. 

\begin{example}
     $R={\mathbb C}[x,y]/\langle x\rangle$ has $HF_{R}(k)=1$ for all $k$ while $R'={\mathbb C}[x]/\langle x\rangle$ has $HF_{R}(0)=1$ and $HF_R(k)=0$ for $k>0$. Thus the coordinate ring of $\langle x\rangle \subset {\mathbb C}[x,y]$ is Hilbertian while that of $\langle x\rangle \subset {\mathbb C}[x]$ is not.
\end{example}

The strengthening of Theorem~\ref{thm:main} given in Theorem~\ref{thm:strongermain}
shows that the Schubert determinantal ideals remain Hilbertian even after removing ``irrelevant variables''. This strengthened version generalizes Abhyankar's Hilbertian theorem (see Example~\ref{exa:eff}). Theorem~\ref{thm:strongermain} and Theorem~\ref{thm:main} are proved together in Section~\ref{sec:proofs}.

The 
first fundamental theorem of invariant theory for $GL(V)$ is equivalent to \emph{Schur-Weyl duality} (see \cite{GoodmanWallach}) and thereby connected to representation theory of general linear groups. We remark that just as $R_{r,p,q}$ is a $GL_p$-module, $R_w$ is a module for choices of (reductive) Levi subgroups $L\leq GL_n$ depending on the descent positions of $w$. The representation-theoretic decomposition of $R_w$ into $L$-irreducibles is of significance to the study of the
Hilbert function of $R_w$. We hope to address this matter in future work.

\subsection{The prevalence of the Hilbertian property}
As evidenced in this paper, many varieties related to Schubert geometry are Hilbertian. The simplicity of the arguments in this paper suggest similar proofs in other instances. Indeed, any Cohen--Macaulay square-free monomial ideal has an almost Hilbertian coordinate ring (Theorem~\ref{thm:SRKpoly}). As observed (using different language) by Bruns--Herzog in \cite{BHa-inv} and recorded in Corollary~\ref{cor:SRHilbertian} here, such a coordinate ring is Hilbertian if and only if the reduced Euler characteristic of the associated simplicial complex is $0$. In our experience, the Hilbert function and polynomial typically agree (i.e., equal $0$) for inputs well below $0$.

After the original version of this preprint was made public, 
we obtained feedback from Knutson that inspired us to characterize the Hilbertian \emph{subword complexes} of Knutson--Miller in Theorem~\ref{thm:subwordHilbertian}. Realizing $I_w$ and $\widetilde{I}_w$ via subword complexes yields a second proof of Theorems~\ref{thm:main} and \ref{thm:strongermain}.
Combining Theorem~\ref{thm:subwordHilbertian} with 
Knutson \cite{Knutson:patches} we can classify
Kazhdan-Lusztig ideals that are Hilbertian (when the ideals are standard graded); see Theorem~\ref{thm:KL}. We similarly classify the Hilbertian \textit{skew-symmetric matrix Schubert varieties} of Marberg-Pawlowski \cite{Marberg.Pawlowski}. We expect that our proofs of Theorem~\ref{thm:main}, combined with
work of Kinser-Rajchgot \cite{Kinser.Rajchgot} and/or Kinser-Knutson-Rajchgot \cite{Kinser} will
permit a characterization the Hilbertian \textit{quiver loci} for an $A_n$-quiver of arbitrary orientation. 
In addition, we conjecture the Hilbertian property for the tangent cones to a Schubert variety $X_w$ at a $T$-fixed point $e_v$ when $v<w$ in Bruhat order; see \cite{Yong}. We confirmed this conjecture by computer for $n\leq 6$ and can prove it with Theorem~\ref{thm:main} if $w$ is ``covexillary''.

\section{Preliminaries}

\subsection{Regularity and Hilbertian ideals}
We review standard commutative algebra that we need to state and prove Theorem~\ref{thm:main}, with \cite[Chapter 6]{CLO2}, \cite[Sections 8.2 and 8.3]{Miller.Sturmfels} and \cite[Chapter 4]{BH} as our references. Let $S = \complexes[x_1,\dots,x_N]$ be a standard graded polynomial ring. For any homogeneous ideal $I\subseteq S$, a \emph{(graded) free resolution} $F_\bullet$ of $S/I$ is a sequence $\{F_i\}_{i=0}^\infty$ of free $S$-modules $F_i$ connected by degree-0 graded maps $\{\partial_i\}_{i=1}^\infty$ as follows:
\begin{equation*}
    \cdots\xrightarrow{\partial_{k+1}} F_k\xrightarrow{\partial_k} F_{k-1}\xrightarrow{\partial_{k-1}}\cdots\xrightarrow{\partial_1} F_0\to S/I\to 0.
\end{equation*}
We require that this sequence be \emph{exact}, meaning that the image of each map is the kernel of the next. If $F_k\neq 0$ and $F_i = 0$ for all $i>k$, then $F_\bullet$ has \textit{length} $k$. Let $S(-j)$ denote a copy of $S$ with all degrees shifted up by $j$ (so $\deg x_i = 1+j$). Then each free module $F_i$ in a free resolution $F_\bullet$ can be uniquely expressed as $\bigoplus_{j\in\integers} S(-j)^{b_{ij}}$ for some non-negative integers $b_{ij}$. The maps $\partial_i$ in $F_\bullet$ can be written as matrices with entries in $S$; $F_\bullet$ is called  \textit{minimal} if none of these entries are units. Equivalently, $F_\bullet$ is minimal if it simultaneously minimizes the values of all $b_{ij}$ among free resolutions of $S/I$. Hilbert proved that $S/I$ always has a minimal free resolution of length at most $n$, which is unique up to isomorphism \cite[Theorems 6.3.8 and 6.3.13]{CLO2}. The values $b_{ij}$ occurring in the minimal free resolution of $S/I$ are called the \textit{(graded) Betti numbers} of $S/I$ and denoted $\beta_{ij}$.

\begin{definition}
    The \emph{(Castelnuovo-Mumford) regularity} of $S/I$ is
    \begin{equation*}
        \reg(S/I) = \max\{j-i|\beta_{ij}\neq 0\}.
    \end{equation*}
\end{definition}

We sometimes abuse notation by referring to the regularity of an ideal when we mean the regularity of its coordinate ring. This abuse is convenient because regularity is stable under inclusions of ideals into larger polynomial rings, as the next proposition shows.

\begin{proposition}\label{prop:regstable}
    Let $I\subseteq S = \complexes[x_1,\dots,x_N]$ be a homogeneous ideal, $T = \complexes[y_1,\dots, y_M]$, and $R = S\otimes_\complexes T = \complexes[x_1,\dots,x_N, y_1,\dots,y_M]$. Then $\reg(S/I) = \reg(R/I)$.
\end{proposition}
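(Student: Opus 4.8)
The plan is to reduce the claim to an equality of graded Betti numbers, since $\reg(S/I)=\max\{j-i\mid \beta_{ij}\neq 0\}$ depends only on these. Writing $IR$ for the ideal of $R$ generated by $I$, note that $R/IR=(S/I)\otimes_\complexes T=(S/I)\otimes_S R$, so it suffices to prove $\beta_{ij}(R/IR)=\beta_{ij}(S/I)$ for all $i,j$. I would obtain this by base changing a minimal graded free resolution of $S/I$ over $S$ along the inclusion $S\hookrightarrow R$.

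First I would fix a minimal graded free resolution $F_\bullet$ of $S/I$ over $S$, with $F_i=\bigoplus_j S(-j)^{\beta_{ij}}$, and apply $-\otimes_S R$. The key structural point is that $R=S\otimes_\complexes T$ is a free $S$-module (a basis is given by the monomials in the $y$-variables), hence flat over $S$; therefore $-\otimes_S R$ preserves exactness and carries $F_\bullet$ to a resolution of $(S/I)\otimes_S R=R/IR$. Moreover this functor sends free modules to free modules with the same degree shifts: since $F_i\otimes_S R=F_i\otimes_\complexes T$ and $S(-j)\otimes_S R\cong R(-j)$ as graded $R$-modules, the resolution $F_\bullet\otimes_S R$ has the form $\bigoplus_j R(-j)^{\beta_{ij}}$ in homological degree $i$, with the exact same exponents $\beta_{ij}$.

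It remains to check that $F_\bullet\otimes_S R$ is still minimal, for then uniqueness of minimal resolutions forces its exponents to be the Betti numbers $\beta_{ij}(R/IR)$, giving the desired equality. This is the only step requiring a small argument: by minimality of $F_\bullet$, every entry of each differential $\partial_i$ lies in the homogeneous maximal ideal $\mathfrak{m}_S=\langle x_1,\dots,x_N\rangle$, i.e. is a non-unit of $S$. Under $S\hookrightarrow R$ these entries lie in $\langle x_1,\dots,x_N\rangle\subseteq\mathfrak{m}_R=\langle x_1,\dots,x_N,y_1,\dots,y_M\rangle$, so they remain non-units of $R$; the base-changed differentials thus have no unit entries and $F_\bullet\otimes_S R$ is minimal. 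Comparing $\max\{j-i\mid\beta_{ij}\neq 0\}$ for the two resolutions then yields $\reg(R/I)=\reg(S/I)$.

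I do not expect a genuine obstacle here: the entire argument is flat base change, and the only point deserving care is the preservation of minimality just described. As a sanity check one could instead compute $\beta_{ij}=\dim_\complexes\operatorname{Tor}_i^S(S/I,\complexes)_j$ and apply the K\"unneth formula to $\operatorname{Tor}_i^R(R/IR,\complexes)$, where the tensor factor $\operatorname{Tor}_b^T(T,\complexes)$ vanishes for $b>0$ and equals $\complexes$ for $b=0$; this collapses the K\"unneth sum to $\operatorname{Tor}_i^S(S/I,\complexes)$ in each degree, recovering the same equality of Betti numbers.
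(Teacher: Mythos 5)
Your proof is correct and follows essentially the same route as the paper: base change the minimal free resolution of $S/I$ along the flat (indeed free) extension $S\hookrightarrow R$, observe that the shifted free modules and hence the Betti numbers are unchanged, and check minimality by noting the differential entries remain non-units. The paper phrases this as applying $-\otimes_\complexes T$ and verifies minimality via Kronecker products of the matrices, but that is the same argument.
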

\begin{proof}
    Let $F_\bullet$ be the minimal free resolution of $S/I$. Let $G_\bullet = F_\bullet\otimes_\complexes T$, meaning $G_i = F_i\otimes_\complexes T$ and $\partial^G_i = \partial^F_i\otimes_\complexes id_T$ for all $i\geq 0$. Since tensor products distribute over direct sums, we can express $G_i$ as a direct sum of free $R$-modules $S(-j)\otimes_\complexes T\cong R(-j)$. Thus $G_i = \bigoplus_{j\in\integers} R(-j)^{\beta_{ij}}$ where the $\beta_{ij}$ are the graded Betti numbers of $S/I$. Furthermore, the functor $-\otimes_\complexes T$ is exact (indeed, tensoring over a field is always exact), so $G_\bullet$ forms an exact sequence and is therefore a resolution of $R/I$. This resolution is minimal because the matrices representing each $\partial^G_i$ are given by Kronecker products of the matrices representing $\partial^F_i$ and $id_T$, and the entries of $\partial^F_i$ are non-units by assumption. Thus the Betti numbers of $S/I$ and $R/I$ are the same, so in particular $\reg(S/I) = \reg(R/I)$ as claimed.
\end{proof}

\begin{remark}
    Let $I\subseteq S$ and $J\subseteq T$ be ideals, and let $F_\bullet$ and $G_\bullet$ be minimal free resolutions for $S/I$ and $T/J$ respectively. Then $F_\bullet\otimes_\complexes G_\bullet$ is always a minimal free resolution for $(S/I)\otimes_\complexes (T/J)\cong R/(I+J)$. Proposition~\ref{prop:regstable} is the special case where $J = (0)$.
\end{remark}

\begin{definition}
    For a homogeneous ideal $I\subseteq S$, the \emph{Hilbert function} of $S/I$ is the function $HF_{S/I}$ sending each non-negative integer $k$ to the dimension of the grade-$k$ component of $S/I$ (viewed as a $\complexes$-vector space).
\end{definition}

The \emph{Hilbert series} of $S/I$ is the formal generating series for $HF_{S/I}$. This series is a rational function whose numerator is called the \emph{K-polynomial}\footnote{In some sources the $K$-polynomial is simply called the \textit{Hilbert numerator}.} $K_{S/I}(t)$ \cite[Theorem 8.20]{Miller.Sturmfels}:
\begin{equation*}
    \sum_{k=0}^\infty HF_{S/I}(k)t^k = \frac{K_{S/I}(t)}{(1-t)^N}.
\end{equation*}
The Hilbert function agrees with a (unique) polynomial for sufficiently large input; this polynomial is the \emph{Hilbert polynomial} $HP_{S/I}$ \cite[Proposition 6.4.7]{CLO2}. The \emph{postulation number} of $S/I$ captures the ``sufficiently large" condition exactly:
\begin{equation*}
    \post(S/I) = \max\{k: HF_{S/I}(k)\neq HP_{S/I}(k)\}.
\end{equation*}
The coordinate ring $S/I$ is called \textit{Hilbertian} if $HF_{S/I}(k) = HP_{S/I}(k)$ for all $k\geq 0$, i.e., if $\post(S/I) < 0$. The postulation number $\post(S/I)$ and $K$-polynomial $K_{S/I}(t)$ are related via the regularity of $S/I$. This relationship is given by Lemma~\ref{lemma:postformula}: the first part is considered well-known by experts
and can be obtained from \cite[Theorem~4.4.3]{BH}, while the second is explicitly \cite[Proposition 4.1.12]{BH}.

\begin{lemma}[{\cite[Theorem~4.4.3, Proposition~4.1.12]{BH}}]\label{lemma:postformula}
    Let $I\subseteq S = \complexes[x_1,\dots,x_N]$ be a homogeneous ideal such that $S/I$ is Cohen-Macaulay. Let $X$ be the variety for $S/I$. Then
    \begin{enumerate}
    \item $\reg(S/I) = \deg(K_{S/I}(t)) - \mathrm{codim}(X)$.
    \item $\post(S/I) = \reg(S/I) - \dim(X) = \deg(K_{S/I}(t)) - N$.
    \end{enumerate}
\end{lemma}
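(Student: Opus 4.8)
The plan is to pass through the Hilbert series written in Cohen--Macaulay normal form and to identify $\reg(S/I)$ with the top nonvanishing degree of local cohomology. Write $d = \dim(X)$ and $c = \mathrm{codim}(X)$, so that $N = c+d$. Since $S/I$ is Cohen--Macaulay of Krull dimension $d$, its Hilbert series admits the normal form $\sum_k HF_{S/I}(k)t^k = h(t)/(1-t)^d$ with $h(1)\neq 0$ (the multiplicity). Comparing with the defining expression $K_{S/I}(t)/(1-t)^N$ gives $K_{S/I}(t)=h(t)(1-t)^{c}$, and because $h(1)\neq 0$ there is no cancellation in top degree, so $\deg(K_{S/I}(t))=\deg h(t)+c$. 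Thus proving part (1) amounts to showing $\reg(S/I)=\deg h(t)$, and the second equality of part (2) will follow formally once I know $\post(S/I)=\deg h(t)-d$.

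First I would establish the elementary half, $\post(S/I)=\deg h(t)-d$. Expanding $1/(1-t)^d=\sum_k \binom{k+d-1}{d-1}t^k$ and writing $h(t)=\sum_{i=0}^{s}h_i t^i$ with $h_s\neq 0$ and $s=\deg h(t)$, I get $HF_{S/I}(k)=\sum_i h_i\binom{k-i+d-1}{d-1}$, the summand being the genuine power-series coefficient (hence zero for $k<i$), while $HP_{S/I}(k)$ is the same sum with each binomial read as the degree-$(d-1)$ polynomial in $k$. The two interpretations differ only where the counting coefficient is truncated, and a short computation gives $HP_{S/I}(k)-HF_{S/I}(k)=\sum_{i\geq k+d} h_i\binom{k-i+d-1}{d-1}$, since the polynomial $\binom{k-i+d-1}{d-1}$ vanishes at the integers $0,\dots,d-2$ and so kills the terms with $k<i\leq k+d-1$. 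The largest $k$ for which this is nonzero is $k=s-d$, where the sum reduces to the single term $h_s\binom{-1}{d-1}=(-1)^{d-1}h_s\neq 0$. Hence $\post(S/I)=s-d=\deg h(t)-d=\deg(K_{S/I}(t))-N$, which is the second equality in part (2).

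The substantive step is to connect $\reg(S/I)$ to this data, and here Cohen--Macaulayness is essential. I would use the local cohomology description of regularity, $\reg(S/I)=\max_i\{\,i+a_i(S/I)\,\}$ with $a_i(M)=\max\{e:H^i_{\mathfrak m}(M)_e\neq 0\}$ and $\mathfrak m$ the irrelevant ideal. For a Cohen--Macaulay module of dimension $d$ the local cohomology is concentrated in the single degree $i=d$, so this collapses to $\reg(S/I)=d+a(S/I)$, where $a(S/I):=a_d(S/I)$ is the $a$-invariant. The Grothendieck--Serre formula $HF_{S/I}(k)-HP_{S/I}(k)=\sum_i(-1)^i\dim_\complexes H^i_{\mathfrak m}(S/I)_k$ likewise collapses, for $S/I$ Cohen--Macaulay, to $(-1)^d\dim_\complexes H^d_{\mathfrak m}(S/I)_k$; consequently $\post(S/I)=\max\{k:H^d_{\mathfrak m}(S/I)_k\neq 0\}=a(S/I)$. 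Combining, $\reg(S/I)=d+\post(S/I)=d+(\deg h(t)-d)=\deg h(t)=\deg(K_{S/I}(t))-c$, which is part (1); rearranging $\reg(S/I)=d+\post(S/I)$ yields $\post(S/I)=\reg(S/I)-\dim(X)$, completing part (2).

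The main obstacle is the input $\reg(S/I)=d+a(S/I)$ for Cohen--Macaulay $S/I$: it rests on the nontrivial equivalence between the Betti-number definition of regularity used in the text and its local cohomology characterization, together with the vanishing $H^i_{\mathfrak m}(S/I)=0$ for $i\neq d$ that defines the Cohen--Macaulay condition. Everything else is either the formal Hilbert-series bookkeeping above or the standard Grothendieck--Serre comparison of Hilbert function and polynomial. These are precisely the facts packaged in \cite[Theorem~4.4.3]{BH} (regularity through the $a$-invariant, equivalently the degree of the $h$-polynomial) and \cite[Proposition~4.1.12]{BH} (the postulation-number identity), so in practice I would simply cite those; the sketch above is how one reconstructs them.
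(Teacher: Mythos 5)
Your proposal is correct, and it is consistent with the paper's treatment: the paper offers no proof of this lemma, simply citing \cite[Theorem~4.4.3, Proposition~4.1.12]{BH} (noting part (1) is ``well-known to experts''), which is exactly where your argument lands. Your sketch accurately reconstructs the standard derivation behind those citations --- the $h$-polynomial bookkeeping for $\post(S/I)$, and the identification $\reg(S/I)=d+a(S/I)$ via the local cohomology characterization of regularity together with the Grothendieck--Serre formula, using Cohen--Macaulayness to concentrate $H^i_{\mathfrak m}(S/I)$ in degree $i=d$.
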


Proving $S/I$ is Hilbertian therefore reduces to showing that $\deg(K_{S/I}(t)) < N$. Similarly, $S/I$ is almost Hilbertian when $\deg(K_{S/I}(t))\leq N$.

\begin{remark}
    Theorem 4.4.3(c) and the subsequent discussion in \cite{BH} show that $\post(S/I)$ agrees with the \textit{$a$-invariant} $a(S/I)$ when $S/I$ is Cohen--Macaulay. In \cite{BHa-inv}, Bruns--Herzog compute the $a$-invariants of classical determinantal ideals using the methods below.
\end{remark}

\subsection{Stanley-Reisner theory and Gr\"obner degeneration}
The combination of Stanley-Reisner theory and Gr\"obner degeneration provide a convenient method for computing Hilbert series. We review the basics with \cite{Miller.Sturmfels} as reference.

\begin{definition}
    Let $I\subseteq S = \complexes[x_1,\dots,x_N]$ be a square-free monomial ideal. The \emph{Stanley--Reisner complex} associated to $I$ is the simplicial complex $\Delta_I$ on $[N]$ whose faces are indexed by the square-free monomials not in $I$.
\end{definition}

The Stanley--Reisner construction gives a bijection between simplicial complexes on $[N]$ and square-free monomial ideals in $\complexes[x_1,\dots,x_N]$ \cite[Theorem 1.7]{Miller.Sturmfels}. When $I$ is a square-free monomial ideal, this allows one to read the $K$-polynomial of $S/I$ off from $\Delta_I$ directly.

\begin{theorem}[{\cite[Theorem 1.13]{Miller.Sturmfels}}]\label{thm:SRKpoly}
    Let $I\subseteq S$ be a square-free monomial ideal. Then $$K_{S/I}(t) = \sum_{\sigma\in\Delta_I}\left(\prod_{i\in\sigma}t\cdot\prod_{j\notin\sigma}(1-t)\right).$$
    In particular, $\deg(K_{S/I}(t))\leq N$ (so if $S/I$ is Cohen--Macaulay it is almost Hilbertian).
\end{theorem}

\begin{corollary}[{\cite[pg. 207]{BHa-inv}}]\label{cor:SRHilbertian}
    If $I\subseteq S$ is a Cohen--Macaulay, square-free monomial ideal, then $S/I$ is Hilbertian if and only if the \textit{reduced Euler characteristic} $\sum_{\sigma\in\Delta_I}(-1)^{|\sigma|}$ is 0. 
\end{corollary}
\begin{proof}
    By Lemma~\ref{lemma:postformula} $S/I$ is Hilbertian if and only if $K_{S/I}(t)$ has degree strictly less than $N$. In the explicit formula for $K_{S/I}(t)$ of Theorem~\ref{thm:SRKpoly}, each face $\sigma\in \Delta_I$ contributes a factor of $(-1)^{|\sigma^c|}t^N$, where $|\sigma^c|$ is the number of vertices not in $\sigma$. Thus $S/I$ is Hilbertian if and only if all these $\pm t^N$ factors cancel, i.e., if $\sum_{\sigma\in\Delta_I}(-1)^{|\sigma^c|} = \pm\sum_{\sigma\in\Delta_I}(-1)^{|\sigma|} = 0$.
\end{proof}

\begin{remark}
    In the case where $\Delta_I$ is \textit{shellable}, \cite[Proposition 2.1]{BHa-inv} gives an explicit formula for the $a$-invariant/postulation number of $S/I$. This could be used to give a formula for $\reg(S/I)$ in
    the cases of coordinate rings of matrix Schubert varieties and 
    Kazhdan-Lusztig varieties \cite{WYKL} (that admit a dilation action, and in general Lie type, such as in \cite{Graham.Kreiman}) after Gr\"obner degeneration; see
    Section~\ref{sec:subword}. We do not pursue this point here.
\end{remark}

When $I$ is only assumed to be homogeneous, \textit{Gr\"obner degeneration} provides a standard procedure for reducing to the monomial case. The \textit{initial ideal} of $I$ with respect to a chosen term order $<$ is the monomial ideal $\init_<(I) = \langle LT_<(f)|f\in I\rangle$, where $LT_<(f)$ is the lead term of $f$ under $<$. The Hilbert functions of $I$ and $\init_<(I)$ are the same (cf. \cite[pg. 158]{Miller.Sturmfels}) and we immediately obtain the following result:

\begin{corollary}\label{cor:sqfreeHilbertian}
    If $I\subseteq S$ is an ideal such that $S/I$ is Cohen--Macaulay and $\init_<(I)$ is square-free for some choice of $<$, then $S/I$ is almost Hilbertian.
\end{corollary}

\begin{remark}
    In many situations one starts with an ideal $I$ and wishes to identify a term order $<$ such that $\init_<(I)$ is square-free. The contrapositive of Corollary~\ref{cor:sqfreeHilbertian} says that if $S/I$ is Cohen--Macaulay and \textit{not} almost Hilbertian, then no such term order exists. A simple example is given by the homogenized cuspidal cubic $\{x^3-y^2z = 0\}$ in $\complexes^3$.
\end{remark}

\begin{remark}
    The method for computing $K_{S/I}(t)$ above can be modified to work when $\init_<(I)$ is not square-free. \textit{Polarizing} $\init_<(I)$ yields a square-free monomial ideal $J$ such that $K_{S/J}(t) = K_{S/I}(t)$, to which Theorem~\ref{thm:SRKpoly} can be applied.\footnote{See \cite{SY:CCAR} which applies this to give an algorithm that constructs sets whose cardinality equals the degree of a homogeneous ideal $I$.} Polarization does not preserve the postulation number: in general we have $\post(S/J) \leq \post(S/I)$. 
\end{remark}

\subsection{Permutation combinatorics}
We need some standard permutation combinatorics; our reference is
\cite{Manivel}. The \textit{Coxeter length} of $w\in S_n$ is the number of inversions in $w$, i.e.,
$    \ell(w):=\#\{i<j:w(i)>w(j)\}$. 
The \emph{graph} of $w\in S_n$ places a $\bullet$ in each position $(i,w(i))$ (written in matrix notation). The \textit{Rothe diagram} of $w$, denoted $D(w)$, consists of all boxes in $[n]\times[n]$ not weakly below or right of a $\bullet$. We have
\[D(w)=\{(i,j)\in [n]\times [n]: j<w(i), i<w^{-1}(j)\}.\]
The \emph{essential set} $E(w)$ of $w$ is comprised of the maximally southeast boxes of each connected component of $D(w)$, i.e.,
\[E(w)=\{(i,j)\in D(w): (i,j+1), (i+1,j)\not\in D(w)\}.\]

\begin{definition}
    The \emph{effective region} of $w\in S_n$, denoted $\lambda(w)$, consists of $(i,j)\in [n]\times [n]$ such that
    $(i,j)$ is weakly northwest of some $(i',j')\in E(w)$.
\end{definition}

It follows immediately that $\lambda(w)$ has the shape of a Young diagram. 

\begin{definition}
    $w$ is \emph{dominant} if $\lambda(w)=D(w)$.
\end{definition}

It is convenient to work in \[S_{\infty}=\bigcup_{n\geq 1} S_n\] where we identify two permutations $w\in S_n ,w'\in S_n'$ for $n<n'$
if $w(i)=w'(i)$ for $1\leq i\leq n$ and $w'(i)=i$ for $n+1<i\leq n'$. In such a case, $D(w)\subseteq [n]\times [n]$ and $D(w')\subseteq [n']\times [n']$ have the same elements. Identifying these diagrams allows us to unambiguously refer to \emph{the} diagram $D(w)$ of $w \in S_{\infty}$.

Let $t_{a\leftrightarrow b}$ be the transposition on $S_{\infty}$ interchanging $a$ and $b$. Hence $wt_{a\leftrightarrow b}$ swaps the \emph{positions} $a$ and $b$. The simple transposition $s_i$ is $t_{i\leftrightarrow i+1}$. A \emph{descent} of a permutation $w$ is an index $i$ such that $\ell(ws_i)<\ell(w)$. 

\subsection{Grothendieck polynomials}
We recall the notion of
\emph{Grothendieck polynomial} due to Lascoux--Sch\"utzenberger \cite{LS:Hopf}. 
The definition we use is not the original one and is due to Lascoux \cite{Lascoux:transition} (see also \cite{Lenart:transition}). 
Let ${\bf x}=\{x_1,x_2,\ldots \ \}$ be a collection of
commuting independent variables.
For each $w \in S_\infty$,
there is a Grothendieck polynomial $\Groth_w({\bf x})$. These polynomials
satisfying the following recursion:

\begin{theorem}[Lascoux's Transition formula for Grothendieck polynomials \cite{Lascoux:transition}, cf.~\cite{Lenart:transition}]
\label{theorem:trans}
    Let $w\in S_{\infty}$ have last descent $g$,
    let $m>g$ be the largest integer such that
    $w(m)<w(g)$ and set $w'=w t_{g\leftrightarrow m}$.
    Suppose that $1\leq i_1<i_2<\ldots<i_s<g$ are the indices such that
    $\ell(w' t_{i_j\leftrightarrow g})=\ell(w')+1$. Then:
    \begin{equation}\label{eqn:K-transition}
        \groth_{w}(X) = \groth_{w'}(X)
        +(x_g -1)\big[\groth_{w'}(X)\cdot ({\sf Id}-t_{i_1\leftrightarrow g})\cdots
        ({\sf Id}-t_{i_s\leftrightarrow g})\big],
    \end{equation}
    where $t_{j\leftrightarrow l}$ acts on the $\{\groth_{u}(X)\}$ by
    $\groth_{u}(X)\cdot t_{j\leftrightarrow l}
    = \groth_{u t_{j\leftrightarrow l}}(X)$
    and ${\sf Id}$ is the identity operator.
\end{theorem}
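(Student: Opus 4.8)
The plan is to derive the identity from the algebraic structure of Grothendieck polynomials---the isobaric divided-difference (Demazure) operators together with a Monk/Chevalley-type multiplication rule---and to induct on the number $s$ of transpositions appearing on the right-hand side. Recall that $\groth_{w_0}=\prod_i x_i^{\,n-i}$ for the longest element $w_0\in S_n$ and that $\groth_{ws_i}=\pi_i\groth_w$ whenever $i$ is a descent of $w$, where $\pi_i f=\partial_i\big((1-x_{i+1})f\big)$ and $\partial_i f=(f-s_i f)/(x_i-x_{i+1})$; these relations determine every $\groth_w$. First I would record the combinatorics forced by the hypothesis that $g$ is the \emph{last} descent of $w$: the tail values $w(g+1)<w(g+2)<\cdots$ increase, the indices $j>g$ with $w(j)<w(g)$ form the contiguous block $\{g+1,\dots,m\}$, and a direct inversion count gives $\ell(w')=\ell(w)-1$. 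Thus $w$ covers $w'=w\,t_{g\leftrightarrow m}$ in Bruhat order, and the ``competing'' covers of $w'$ running through position $g$ are exactly the length-increasing moves $w'\,t_{i_j\leftrightarrow g}$ singled out in the statement.

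The base case $s=0$ asserts $\groth_w=x_g\,\groth_{w'}$; here no earlier position can be raised into $g$, and I would verify this monomial multiplication directly from the operator definition (e.g.\ reducing to the dominant situation where both sides are explicit products over the Rothe diagram). For the inductive step, abbreviate $P=({\sf Id}-t_{i_1\leftrightarrow g})\cdots({\sf Id}-t_{i_s\leftrightarrow g})$, so that the right-hand side equals $\groth_{w'}({\sf Id}-P)+x_g\,(\groth_{w'}\cdot P)$. Expanding $\groth_{w'}\cdot P$ by inclusion--exclusion produces a signed sum of $\groth_u$ over chains of transpositions through $g$. The crux is a Monk-type rule computing each $x_g\cdot\groth_u$ as a signed sum of Grothendieck polynomials obtained by length-one and (K-theoretic) length-preserving moves; feeding the expansion of $P$ into this rule, the resulting terms cancel against $\groth_{w'}({\sf Id}-P)$ in pairs, leaving precisely $\groth_w$.

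A parallel, and perhaps more transparent, route is the combinatorial one underlying the ``graphical'' formulation of \cite{Knutson.Yong}: realize $\groth_w=\sum_D(-1)^{|D|-\ell(w)}\prod_{(i,j)\in D}x_i$ as a signed generating function over pipe dreams (RC-graphs) for $w$, and then build a weight-preserving, sign-respecting correspondence that sorts the pipe dreams of $w$ according to the configuration in row $g$. The factor $x_g$ marks a crossing in row $g$, the transpositions $t_{i_j\leftrightarrow g}$ track where a pipe entering row $g$ may be rerouted, and the alternating sign of the operator product $P$ is exactly the inclusion--exclusion governing overlapping reroutings. Matching both sides pipe dream by pipe dream then yields \eqref{eqn:K-transition}.

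The main obstacle in either approach is the same: controlling the cancellation. Unlike the Schubert transition formula, where the analogous identity is a clean consequence of Monk's rule with only length-additive terms, the Grothendieck polynomials are inhomogeneous, so the Monk/Chevalley expansion carries higher-degree ``$K$-theoretic correction'' terms. I expect the delicate point to be proving that these corrections, together with the non-reduced pipe dreams, are captured \emph{exactly} by the operator product $P$---that is, that no extraneous $\groth_u$ survives and that the inclusion--exclusion signs align with those coming from the Bruhat-order chain combinatorics. Verifying that the iterated products $w'\,t_{i_{j_1}\leftrightarrow g}\,t_{i_{j_2}\leftrightarrow g}\cdots$ remain length-controlled is the technical heart of the argument.
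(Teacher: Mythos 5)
The paper does not prove Theorem~\ref{theorem:trans} at all: it is quoted from \cite{Lascoux:transition} (cf.~\cite{Lenart:transition}) and is then used as the defining recursion for the $\Groth_w$ (``Theorem~\ref{theorem:trans} uniquely determines all $\Groth_w$ from the base case $\groth_{id}=1$''), so there is no internal proof to compare yours against. Judged on its own terms, your proposal is a strategy outline rather than a proof. The preliminary combinatorics you record are correct: since $g$ is the last descent the tail values increase, the positions $j>g$ with $w(j)<w(g)$ are exactly $g+1,\dots,m$, hence $\ell(w')=\ell(w)-1$, and your rewriting of the right-hand side as $\groth_{w'}({\sf Id}-P)+x_g(\groth_{w'}\cdot P)$ is an accurate reduction. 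None of this, however, touches the identity itself.

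The genuine gap is that the entire content of the theorem lives in the step you defer. You invoke ``a Monk-type rule computing each $x_g\cdot\groth_u$ as a signed sum'' and assert that the resulting terms ``cancel against $\groth_{w'}({\sf Id}-P)$ in pairs, leaving precisely $\groth_w$,'' while explicitly saying you only \emph{expect} the $K$-theoretic correction terms to be captured exactly by the operator product $P$. That expectation \emph{is} the theorem. To close it you would need to state the precise $K$-theoretic Monk/Chevalley formula (a signed sum over saturated chains of transpositions through position $g$), verify that each iterated product $w'\,t_{i_{j_1}\leftrightarrow g}t_{i_{j_2}\leftrightarrow g}\cdots$ increases length by exactly one at every step, and exhibit the term-by-term matching showing no extraneous $\groth_u$ survives; even your base case $\groth_w=x_g\groth_{w'}$ for $s=0$ is only promised, not verified. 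The same criticism applies to the pipe-dream alternative: the ``weight-preserving, sign-respecting correspondence'' sorting pipe dreams by their row-$g$ configuration is named but never constructed. Either route can in fact be completed --- Lenart's proof in \cite{Lenart:transition} follows essentially your first outline --- but as written the proposal establishes only the bookkeeping surrounding the identity, not the identity itself.
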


Theorem~\ref{theorem:trans} uniquely determines all $\Groth_w$ from the base case $\groth_{id} = 1$.

\section{A strengthened version of Theorem~\ref{thm:main}}\label{section:strengthening}
Recall from the introduction that the \textit{matrix Schubert variety} $\mathfrak{X}_w$ is the $B_-\times B$-orbit closure of the permutation matrix $M_w$ in $\sf{Mat}_n$. It is an affine variety of codimension $\ell(w)$. The defining ideal $I_w$ of $\mathfrak{X}_w$ is called the \emph{Schubert determinantal ideal}. Make the natural
identification ${\mathbb C}[{\sf Mat}_n]={\mathbb C}[z_{ij}:1\leq i,j\leq n]$ where $z_{ij}$ is the $(i,j)$-coordinate function. Fulton \cite{Fulton:duke} produced generators for $I_w$ as follows. Let $r_{ij}$ count the number of $1$'s in the northwest $i\times j$ submatrix of $M_w$. Let $Z=(z_{ij})_{1\leq i,j\leq n}$ be the generic $n\times n$ matrix and set $Z_{ij}$ to be the northwest $i\times j$ submatrix of $Z$. Then
\begin{equation}\label{eqn:thegenerators}
    I_w=\langle \text{rank $r_{ij}+1$ minors of $Z_{ij}$, $1\leq i,j\leq n$}\rangle.
\end{equation}
Moreover in \emph{ibid.} it is proved that $I_w$ is a prime ideal and the coordinate ring
\[R_w:={\mathbb C}[{\mathfrak X}_w]={\mathbb C}[{\sf Mat}_n]/I_w\]
is Cohen-Macaulay. By Lemma \ref{lemma:postformula}(2) it follows that
\begin{equation*}
    \post(R_w) = \deg(K_{R_w}(t))-n^2.
\end{equation*}
The $K$-polynomial $K_{R_w}(t)$ is known. By \cite{Knutson.Miller} it is a Grothendieck polynomial.
\begin{equation*}
    K_{R_w}(t) = \Groth_w(x_i\mapsto 1-t).
\end{equation*}
Theorem~\ref{thm:main} therefore reduces to the statement that $\deg(\Groth_w)< n^2$ for all $n\geq 1$ and $w\in S_n$.

Fulton \cite{Fulton:duke} refined the list of generators for $I_w$ by showing that
\begin{equation}\label{eqn:smallerlist}
    I_w=\langle \text{rank $r_{ij}+1$ minors of $Z_{ij}$, where $(i,j)\in E(w)$}\rangle.\footnote{For a minimal list of generators see \cite{GaoYong}.}
\end{equation}

Notice that the generators of $I_w$ in (\ref{eqn:smallerlist}) only involve the variables $z_{ij}$ where
$(i,j)\in \lambda(w)$. It therefore makes sense to think about the determinantal variety that only uses these
``effective'' variables. Formally, let us define the \emph{effective Schubert determinantal ideal} 
\[{\tilde I}_w\subset {\mathbb C}[z_{ij}:(i,j)\in \lambda(w)]\] 
where ${\tilde I}_w$ uses the same generators
as in (\ref{eqn:smallerlist}). Thus the \emph{effective matrix Schubert variety} ${\widetilde X}_w$ is the zero-locus of these equations
inside the affine space ${\mathbb C}^{|\lambda(w)|}$ rather than ${\mathbb C}^{n^2}$. Let ${\widetilde R}_w$ denote the coordinate ring of ${\widetilde X}_w$. One has a trivial isomorphism,
${\widetilde {\mathfrak X}}_w \times {\mathbb C}^{n^2-|\lambda(w)|} \cong {\mathfrak X}_w$.

\begin{example}\label{exa:eff}
    The effective Schubert determinantal ideals ${\widetilde I}_w$ generalize  $I_{r,p,q}$. Let 
    $w_{r,p,q}=1 \ 2 \ \ldots \ r \ q+1 \ q+2 \ \ldots \ q+p \ r+1 \ r+2 \ \ldots q$.
    Then $\lambda(w_{r,p,q})$ is a $p\times q$ rectangle and $r_{pq} = r$, so the ambient ring of 
    ${\widetilde I}_{w_{r,p,q}}$ is ${\mathbb C}[z_{ij}: 1\leq i\leq p, 1\leq j\leq q]$ and
    ${\widetilde I}_{w_{r,p,q}}=I_{r,p,q}$.\footnote{$w_{r,p,q}$ is a well-known construction. A
    \emph{bigrassmannian permutation} $w\in S_\infty$ is one where $w$ and $w^{-1}$ each have at most one descent.
    All bigrassmannian permutations are of the form $w_{r,p,q}$ for choices of the parameters $r,p,q$.}
 \end{example}

We are now ready to state our strengthened version of Theorem~\ref{thm:main}:

\begin{theorem}\label{thm:strongermain}
    ${\widetilde I}_w$ is Hilbertian if and only if $w$ is not dominant. ${\widetilde I}_w$ is almost Hilbertian for all $w$. 
\end{theorem}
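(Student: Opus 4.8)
The theorem has two directions. The ``only if'' direction requires showing that if $w$ is dominant, then ${\widetilde I}_w$ is \emph{not} Hilbertian; the ``if'' direction requires showing that if $w$ is not dominant, then ${\widetilde I}_w$ \emph{is} Hilbertian. The common engine for both is Lemma~\ref{lemma:postformula}(2): since ${\widetilde R}_w$ is the coordinate ring of the effective matrix Schubert variety sitting in $\complexes^{|\lambda(w)|}$, and since ${\widetilde R}_w$ is Cohen-Macaulay with $\tilde I_w$ prime (these properties descend from the corresponding facts for $R_w$, as removing the free polynomial factor $\complexes^{n^2-|\lambda(w)|}$ affects neither primeness, by the product decomposition ${\widetilde{\mathfrak X}}_w\times\complexes^{n^2-|\lambda(w)|}\cong{\mathfrak X}_w$, nor Cohen-Macaulayness), we get
\[
    \post({\widetilde R}_w)=\deg\big(K_{{\widetilde R}_w}(t)\big)-|\lambda(w)|.
\]
Because ${\widetilde I}_w$ and $I_w$ have the same generators and differ only by the ambient free variables, the $K$-polynomials agree, so $\deg(K_{{\widetilde R}_w})=\deg(K_{R_w})=\deg(\Groth_w)$. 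Hence ${\widetilde I}_w$ is Hilbertian if and only if $\deg(\Groth_w)<|\lambda(w)|$. The entire problem thus reduces to comparing $\deg(\Groth_w)$ against $|\lambda(w)|$.

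\textbf{The dominant case (``only if'').}
First I would dispatch the dominant case directly. When $w$ is dominant, $\lambda(w)=D(w)$ is a genuine Young diagram of size $\ell(w)$, and it is classical (going back to Lascoux--Sch\"utzenberger) that the Grothendieck polynomial of a dominant permutation is a single monomial: $\Groth_w=\prod_{(i,j)\in D(w)}x_i$ (equivalently, $I_w$ is generated by a product of the variables indexed by $D(w)$, and ${\widetilde{\mathfrak X}}_w$ is a coordinate subspace). Therefore $\deg(\Groth_w)=|D(w)|=|\lambda(w)|$, so $\post({\widetilde R}_w)=0\geq 0$ and ${\widetilde I}_w$ is not Hilbertian. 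This is the analogue of the $R'=\complexes[x]/\langle x\rangle$ example in the introduction: removing all the irrelevant variables from a dominant permutation leaves a ring that fails the Hilbertian property by exactly one degree.

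\textbf{The non-dominant case (``if'').}
For the main direction I would establish the strict inequality $\deg(\Groth_w)<|\lambda(w)|$ whenever $w$ is not dominant. The plan is to use the weak upper bound on $\deg(\Groth_w)$ promised in Proposition~\ref{prop:thebound}, which is proved via the graphical form of Lascoux's transition formula (Theorem~\ref{theorem:trans}). I expect this bound to give $\deg(\Groth_w)\leq |\lambda(w)|$ in general, with the task being to show the inequality is \emph{strict} precisely when $w$ is not dominant. The natural strategy is induction on $\ell(w)$ using the transition recursion~\eqref{eqn:K-transition}: the two terms on the right are $\Groth_{w'}$ and $(x_g-1)$ times a signed sum of Grothendieck polynomials $\Groth_{w't_{i_j\leftrightarrow g}}$ of higher length. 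One tracks how the degree and the effective region size change under the transition move $w\mapsto w'=wt_{g\leftrightarrow m}$ (which strictly decreases length) and under the cover moves $w'\mapsto w't_{i_j\leftrightarrow g}$, showing that the degree of the top-degree term in $\Groth_w$ never catches up to $|\lambda(w)|$ unless $w$ is dominant, which is exactly the base case where the recursion terminates in a single monomial.

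\textbf{Main obstacle.}
The hard part will be controlling the strictness of the inequality through the inductive step: the additive $|\lambda(w)|$ bound is an \emph{equality} only in the dominant case, so I must verify that every application of transition to a non-dominant $w$ either lands on strictly-smaller-degree contributions or strictly enlarges $\lambda$ relative to the degree gained, preventing $\deg(\Groth_w)$ from reaching $|\lambda(w)|$. In particular I would need to confirm that the leading term produced by the $(x_g-1)[\cdots]$ summand, which raises degree by one and involves permutations of length $\ell(w)+1$, does not force equality; this requires relating $|\lambda(w't_{i_j\leftrightarrow g})|$ back to $|\lambda(w)|$ and showing non-dominance is preserved (or that any newly-dominant term is offset). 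Packaging this cleanly so that Proposition~\ref{prop:thebound} supplies the non-strict bound and only the dominant/non-dominant dichotomy governs strictness is the crux of the argument.
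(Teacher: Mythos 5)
Your reduction to the statement ``$\deg(\Groth_w)\leq|\lambda(w)|$, with equality exactly when $w$ is dominant'' is the same reduction the paper makes (the paper routes it through $\reg(\widetilde{R}_w)=\reg(R_w)$ via Proposition~\ref{prop:regstable} rather than through equality of $K$-polynomials, but these amount to the same thing), and your treatment of the dominant direction is correct and matches the paper's. The issue is the non-dominant direction, which you leave as a plan whose crux you yourself flag as unresolved; that crux is precisely where the content of Proposition~\ref{prop:thebound} lives, and the framework you propose for it has a structural flaw. You suggest induction on $\ell(w)$, but the permutations $w''=w't_{i_j\leftrightarrow g}$ appearing in the second term of \eqref{eqn:K-transition} satisfy $\ell(w'')=\ell(w')+1=\ell(w)$, so they are not shorter than $w$ and the induction is not well-founded. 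The paper instead inducts on $|\lambda(w)|$, and the key combinatorial input --- which your proposal does not supply --- is that $(g,w(m))$ is a maximally southeast box of $E(w)$, that $D(w')=D(w)\setminus\{(g,w(m))\}$, and that every $w''$ occurring satisfies $\lambda(w'')\subseteq\lambda(w')\subsetneq\lambda(w)$. This single containment both legitimizes the induction and delivers the inequality, since the second term of \eqref{eqn:K-transition} raises degree by one but is built from polynomials of degree at most $|\lambda(w')|\leq|\lambda(w)|-1$.

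For strictness you would additionally need the paper's second observation: if $w$ is not dominant but $w'$ is, then $(g,w(m))$ is an entire connected component of $D(w)$, which forces $|\lambda(w')|\leq|\lambda(w)|-2$ (for instance, neither $(g-1,w(m))$ nor $(g,w(m)-1)$ can lie in $\lambda(w')$); combined with the inductive hypothesis that equality holds only for dominant permutations, this gives $\deg(\Groth_w)<|\lambda(w)|$ for every non-dominant $w$. Without these two diagram-level facts the argument does not close, so as written your proposal establishes only the ``dominant implies not Hilbertian'' half of the theorem.
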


\begin{remark}
    If $w$ is dominant, then $I_w= \langle z_{ij}: (i,j)\in \lambda(w)\rangle$
    is generated by $1\times 1$ minors. Hence ${\widetilde X}_w=\{{\bf 0}\}$. Theorem~\ref{thm:strongermain} therefore asserts that ${\widetilde R}_w$ is Hilbertian unless ${\widetilde R}_w\cong\complexes$.
\end{remark}

\begin{example}
    The Rothe diagram for $w = 24315$ is presented below, with the values of $r_{ij}$ displayed only in elements of $E(w)$. The effective region $\lambda(w)$ is outlined in red.
    
    \begin{figure}[h]
        \centering
        \begin{tikzpicture}[scale = 0.30]
           \draw[black, thick] (0,0) -- (10,0) -- (10,10) -- (0,10) -- (0,0);
            \draw[black, thick] (0, 4) -- (2, 4) -- (2, 10) -- (0,10) -- (0,4);
            \draw[black, thick] (4, 6) -- (8, 6) -- (8, 8) -- (4, 8) -- (4, 6);
            \draw[red, thick] (0, 4) -- (2, 4) -- (2, 6) -- (8, 6) -- (8, 10) -- (0, 10) -- (0, 4);
            \draw[black, thick] (1, 0) -- (1, 3) -- (10, 3);
            \draw[black, thick] (3, 0) -- (3, 9) -- (10, 9);
            \draw[black, thick] (5, 0) -- (5, 5) -- (10, 5);
            \draw[black, thick] (7, 0) -- (7, 1) -- (10, 1);
            \draw[black, thick] (9, 0) -- (9, 7) -- (10, 7);
            \draw[black, thick] (0, 6) -- (2, 6);
            \draw[black, thick] (0, 8) -- (2, 8);
            \draw[black, thick] (6, 6) -- (6, 8);
            \draw (1, 3) node{$\bullet$};
            \draw (3, 9) node{$\bullet$};
            \draw (5, 5) node{$\bullet$};
            \draw (7, 1) node{$\bullet$};
            \draw (9, 7) node{$\bullet$};
            \draw (1, 5) node{0};
            \draw (7, 7) node{1};
        \end{tikzpicture}
    \end{figure}

    The Fulton generators of $I_w$ are the variables $x_{11}$, $x_{21}$, $x_{31}$, along with the six $2\times 2$ minors of the $2\times 4$ matrix $\left(\begin{smallmatrix} x_{11} & x_{12} & x_{13} & x_{14}\\ x_{21} & x_{22} & x_{23} & x_{24}\end{smallmatrix}\right)$. Then $I_w$ is the ideal with these generators in $\complexes[x_{ij}|1\leq i, j\leq 5]$, while $\widetilde{I}_w$ lies in the subring generated by the 9 variables corresponding to boxes in $\lambda(w)$. One can verify that $\deg(\Groth_w) = 6$, so by Lemma~\ref{lemma:postformula}(2) we have $\post(R_w) = 6-25 = -17$ and $\post(\widetilde{R}_w) = 6-9 = -3$. Thus both ideals are Hilbertian in accordance with Theorems \ref{thm:main} and \ref{thm:strongermain}.
\end{example}

Notice that by Proposition~\ref{prop:regstable} we have $\reg({\widetilde R}_w) = \reg(R_w)$. It follows from Lemma~\ref{lemma:postformula}(2) that $\post({\widetilde R}_w) = \deg(\Groth_w) - |\lambda(w)|$. In particular, Theorem~\ref{thm:strongermain} reduces to the claim that $\deg(\Groth_w)\leq |\lambda(w)|$ for all $w$, with equality if and only if $w$ is a dominant partition.

\section{First proof of Theorems~\ref{thm:main} and~\ref{thm:strongermain}} \label{sec:proofs}

\subsection{A (simple) degree bound for Grothendieck polynomials}

\begin{proposition}\label{prop:thebound}
    $\deg(\Groth_w)\leq |\lambda(w)|$, with equality if and only if $w$ is dominant.
\end{proposition}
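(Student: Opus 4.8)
The plan is to prove the degree bound by induction on Coxeter length $\ell(w)$ using Lascoux's transition formula (Theorem~\ref{theorem:trans}), with the comparison between $\deg(\Groth_w)$ and $|\lambda(w)|$ tracked explicitly through the recursion. Since $\Groth_{id}=1$ has degree $0$ and $\lambda(id)=\emptyset$, the base case $w=id$ gives equality, consistent with $id$ being (vacuously) dominant. For the inductive step, I would fix $w$ with last descent $g$ and apply the transition formula, which expresses $\Groth_w$ in terms of $\Groth_{w'}$ (where $w'=wt_{g\leftrightarrow m}$ has $\ell(w')=\ell(w)-1$) together with terms $\Groth_{w'} \cdot \big({\sf Id}-t_{i_1\leftrightarrow g}\big)\cdots\big({\sf Id}-t_{i_s\leftrightarrow g}\big)$ expanded and multiplied by $(x_g-1)$.

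The key combinatorial input is to understand how degree and effective region size change under each transition. First I would establish the degree side: the permutations $w't_{i_j\leftrightarrow g}$ appearing in the expanded product each have length $\ell(w')+1=\ell(w)$, so by induction $\deg(\Groth_{w't_{i_j\leftrightarrow g}})\leq|\lambda(w't_{i_j\leftrightarrow g})|$; multiplying by $(x_g-1)$ raises degree by exactly $1$. Thus I need a bound of the form $\deg(\Groth_w)\leq \max\{\deg(\Groth_{w'}),\,1+\max_T \deg(\Groth_T)\}$ where $T$ ranges over the terms in the product. The crux is then a geometric/combinatorial lemma comparing effective regions: I would show that $|\lambda(w)| \geq 1 + |\lambda(w')|$ and, more importantly, that $|\lambda(w)| \geq 1 + |\lambda(w't_{i_j\leftrightarrow g})|$ for each relevant $j$, or some aggregate inequality that dominates all transition terms simultaneously. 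This amounts to analyzing how the Rothe diagram and its essential set, hence the Young diagram $\lambda$, grow when one performs the swaps that increase length back up to $\ell(w)$.

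The main obstacle I expect is precisely controlling $|\lambda|$ across the transition, because the effective region is determined by the \emph{southeast} essential boxes and is not simply additive in length: passing from $w'$ to $w$ (or to the $w't_{i_j\leftrightarrow g}$) can move essential boxes and reshape $\lambda$ in ways that are not monotone box-by-box. I would attack this by relating the transition swaps to explicit changes in the graph of the permutation near column $w(g)$ and the positions $g,m,i_j$, arguing that the new inversion created lies weakly northwest of the controlling essential box, so that $\lambda$ must gain at least one cell relative to each summand. For the \textbf{equality} characterization I would separate the two cases: if $w$ is dominant, then $I_w$ is generated by the variables $z_{ij}$ for $(i,j)\in\lambda(w)=D(w)$, so $\Groth_w=\prod_{(i,j)\in D(w)}(x_i+\text{lower order})$ has degree exactly $|D(w)|=|\lambda(w)|$ (directly, using dominance rather than the recursion); conversely, if $w$ is not dominant I would show the inequality is strict by exhibiting, in at least one branch of the transition, a cell of $\lambda(w)$ not accounted for by the degree increment, equivalently using that $D(w)\subsetneq\lambda(w)$ forces extra slack.

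Finally, I would note the alternative route flagged in the excerpt: the bound $\deg(\Groth_w)\leq|\lambda(w)|$ can also be read off from $\Groth_w$ being supported on monomials whose exponent vectors fit inside the diagram shape, which may streamline the strict-inequality direction (cf.~Remark~\ref{remark:simpler}). My primary plan, however, is the transition-formula induction, since it keeps the degree bookkeeping and the equality case tightly coupled and requires only the recursion already stated as Theorem~\ref{theorem:trans}.
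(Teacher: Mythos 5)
Your overall strategy---induction through Lascoux's transition formula, comparing $\deg(\Groth)$ against $|\lambda|$---is the same as the paper's, but your induction scheme has a genuine gap. You induct on $\ell(w)$ and then invoke the inductive hypothesis for the permutations $w't_{i_j\leftrightarrow g}$ (and the longer products $w't_{i_{j_1}\leftrightarrow g}\cdots t_{i_{j_r}\leftrightarrow g}$) appearing when the right-hand side of (\ref{eqn:K-transition}) is expanded. As you yourself observe, these have length $\ell(w')+1=\ell(w)$ or greater, so they are \emph{not} covered by an inductive hypothesis indexed by length; the argument as set up is circular. The paper avoids this by inducting on $|\lambda(w)|$ instead, using the ``diagram moves'' description of transition from \cite[Section~2]{Knutson.Yong}: $(g,w(m))$ is a maximally southeast box of $E(w)$, $D(w')=D(w)\setminus\{(g,w(m))\}$, and every permutation $w''$ occurring in the product satisfies $\lambda(w'')\subseteq\lambda(w')\subsetneq\lambda(w)$. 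These facts simultaneously legitimize the induction (every term has strictly smaller effective region) and give the numerical inequality $|\lambda(w'')|\leq|\lambda(w)|-1$ that absorbs the extra factor of $(x_g-1)$. You correctly identify an inequality of this shape as ``the crux,'' but you leave it as a goal to be established; since it is precisely the content of the proof, the proposal is incomplete rather than merely differently organized.

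Two smaller points. For equality when $w$ is dominant, the paper stays inside the recursion: dominance forces $w'$ to be dominant and (\ref{eqn:K-transition}) to collapse to $\Groth_w=x_g\Groth_{w'}$, whence $\Groth_w=\prod_{(i,j)\in\lambda(w)}x_i$ exactly---a monomial, not ``$x_i$ plus lower-order terms'' as you write. Your alternative route through $I_w$ being generated by variables can be made to work, but it needs the Knutson--Miller identification of $\Groth_w$ with the $K$-polynomial to transfer degree information, which you do not supply. For strictness when $w$ is not dominant, the paper isolates the one delicate case ($w$ not dominant but $w'$ dominant) and shows $|\lambda(w')|\leq|\lambda(w)|-2$ because $(g,w(m))$ is then an entire connected component of $D(w)$; your sketch gestures at ``extra slack'' without locating where it comes from.
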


This bound is quite weak. For example, $\deg({\mathfrak G}_{s_k})=k$
whereas $\lambda(s_k)=k^2$.
Nonetheless, it suffices for our needs.

\noindent
\emph{Proof of Proposition~\ref{prop:thebound}:}
    To prove the inequality, it helps to think about what (\ref{eqn:K-transition}) means in terms of $D(w)$. This is explained in the ``diagram moves'' description of transition found in \cite[Section~2]{Knutson.Yong}. We refer the reader to that paper for the straightforward translation. The point is that in the transition formula, $(g, w(m))$ is a maximally southeast element of $E(w)$ and $D(w') = D(w)\setminus\{(g, w(m))\}$, so $\lambda(w')\subsetneq\lambda(w)$. The Grothendieck polynomials $\Groth_{w''}$ appearing in  $\groth_{w'}(X)\cdot ({\sf Id}-t_{i_1\leftrightarrow g})\cdots
    ({\sf Id}-t_{i_s\leftrightarrow g})$ all satisfy $\lambda(w'')\subseteq\lambda(w')$, implying
    \begin{equation}\label{eqn:cde}
        \lambda(w'')\subsetneq \lambda(w).
    \end{equation}
    Therefore the desired inequality follows from Theorem~\ref{theorem:trans} by induction on $|\lambda(w)|$.

    Now suppose that $w$ is not dominant but $w'$ is dominant. Then $(g, w(m))$ comprises an entire connected component of $D(w)$ and it follows that $\lambda(w')$ has at least two fewer elements than $\lambda(w)$ (for example, $\lambda(w')$ cannot contain $(g-1, w(m))$ or $(g, w(m)-1)$). The same induction as above then shows that $\deg(\Groth_w) < |\lambda(w)|$ when $w$ is not dominant.

    Conversely, suppose that $w$ is dominant. Then $w'$ is also dominant and (\ref{eqn:K-transition}) reduces to
     $\Groth_{w}({\bf x})=x_g \Groth_{w'}$. By induction, 
    $\Groth_{w}({\bf x})=\prod_{i: (i,j)\in \lambda(w)} x_i$
    is a monomial, of degree $|\lambda(w')|$. This establishes  ``$\Leftarrow$'' of the equality characterization and completes the proof.\qed

\subsection{Conclusion of the proofs of Theorems~\ref{thm:main} and~\ref{thm:strongermain}}
As noted in Section~\ref{section:strengthening}, Theorem~\ref{thm:main} reduces to showing that $\deg(\Groth_w) < n^2$. Theorem~\ref{thm:strongermain} similarly reduces to showing the stronger inequality $\deg(\Groth_w) \leq |\lambda(w)|$ with equality if and only if $w$ is dominant. Thus Proposition~\ref{prop:thebound} proves both theorems. \qed
 
\begin{remark}[A simpler degree bound]\label{remark:simpler}
    It is well-known, and trivial to prove using the combinatorial formula for Grothendieck polynomials from \cite{Fomin.Kirillov}, that $\deg(\Groth_w)\leq {n\choose 2}$. This upper bound on $\reg(R_w)$ is independent from $w$ and gives an  easier proof of Theorem~\ref{thm:main} since ${n\choose 2} < n^2$ for $n>0$. The weaker bound is not sufficient to prove Theorem~\ref{thm:strongermain}.
\end{remark}

\section{Hilbertian subword complexes, Kazhdan-Lusztig ideals, and more}\label{sec:subword}
In this section we generalize Theorem~\ref{thm:main} to the subword complexes of Knutson--Miller \cite{Knutson.Miller, Knutson.Miller:subword}. We sketch a second proof of Theorem~\ref{thm:strongermain} from this perspective before proving an analogous result for Kazhdan--Lusztig varieties. Finally, we comment on Hilbertian type-$A$ quiver loci and skew-symmetric matrix Schubert varieties.

\begin{definition}
    Let $\Pi$ be a Coxeter group and $\Sigma$ a minimal set of simple reflections generating $\Pi$. Let $Q = (\sigma_1,\dots,\sigma_m)$ be a sequence of elements in $\Sigma$ and let $\pi\in\Pi$. The \textit{subword complex} $\Delta(Q, \pi)$ is the simplicial complex on $[m]$ whose facets are subsequences of $Q$ whose complements are reduced expressions for $\pi$.
\end{definition}

Subword complexes are shellable by \cite[Theorem 2.5]{Knutson.Miller:subword}. Thus the associated coordinate rings $R_\Delta$ are Cohen--Macaulay (\cite[Theorem 13.45]{Miller.Sturmfels}) and we can check the Hilbertian property using Lemma~\ref{lemma:postformula}. 

\begin{theorem}\label{thm:subwordHilbertian}
    The Stanley--Reisner ring $R_\Delta$ associated to the subword complex $\Delta = \Delta(Q, \pi)$ is Hilbertian if and only if the Demazure product $\delta(Q)$ is equal to $\pi$.
\end{theorem}
\begin{proof}    
    Corollary~3.8 of \cite{Knutson.Miller:subword} shows that $\Delta(Q, \pi)$ is homeomorphic to a sphere if $\delta(Q) = \pi$ and a ball otherwise. The reduced Euler characteristic of a sphere is $1$, while the reduced Euler characteristic of a ball is $0$. Thus the result follows by Corollary~\ref{cor:SRHilbertian}.
\end{proof}

\noindent\emph{Second proof of Theorem~\ref{thm:strongermain} (sketch):}
The Grothendieck polynomial $\Groth_w$ arises as the $K$-polynomial of $\Delta = \Delta(Q, w)$ for an appropriate word $Q$ as seen in \cite[Section~1.8]{Knutson.Miller}. For different choices of $Q$ this complex corresponds to a Gr\"obner degeneration of $I_w$ or $\widetilde{I}_w$ (\cite[Theorem B]{Knutson.Miller}), so their coordinate rings are almost Hilbertian by Corollary~\ref{cor:sqfreeHilbertian}. It is immediately obvious from writing out these choices that $\delta(Q) = w$ if and only if $\Delta$ corresponds to $\widetilde{I}_w$ and $w$ is dominant. In this case, $\Delta$ is the empty complex, so the associated ring $R_\Delta$ is isomorphic to the base field $\complexes$. By Theorem~\ref{thm:subwordHilbertian} the result follows. \qed

The analogous result holds for \emph{Kazhdan-Lusztig varieties} with essentially the same proof. We refer the
reader to \cite{WYKL, Knutson:patches} for definitions of these varieties.

\begin{theorem}\label{thm:KL}
    Let $\mathcal{N}_{v, w}$ be a Kazhdan--Lusztig variety (in general Lie type) defined by homogeneous equations.\footnote{Kazhdan-Lusztig varieties are not given by (standard graded) homogeneous equations in general. In type $A$ it suffices to assume $v$ avoids $321$, see \cite[pg. 25]{Knutson:splitting}. Further analysis of which Kazhdan-Lusztig ideals are standard homogeneous is found in \cite[Section~5.1]{WYKL} but a complete classification is open. For examples in other Lie types, see \cite{Graham.Kreiman}.} Then the coordinate ring $R_{v, w}$ of $\mathcal{N}_{v, w}$ is Hilbertian unless $v = w$ (where $R_{v, w}\cong\complexes$).
\end{theorem}
\begin{proof}
    Kazhdan--Lusztig varieties $\mathcal{N}_{v, w}$ degenerate into varieties corresponding to subword complexes $\Delta = \Delta(Q, v)$, where $Q$ is a reduced word for $w$ by \cite[Theorem 4]{Knutson:patches}.\footnote{\cite[Proposition 3.3]{WYKL} gives an explicit construction of $\Delta(Q, v)$ in type $A$.} Thus $R_{v, w}\cong R_\Delta$ is Cohen--Macaulay, so it is Hilbertian if and only if $\delta(Q) = v$ by Theorem~\ref{thm:subwordHilbertian}. But $\delta(Q) = w$ since $Q$ is a reduced word for $w$, so
    $R_{v, w}$ is Hilbertian if and only if $w = v$. In this case $\Delta$ is the empty complex by definition, so $R_{v, w}\cong\complexes$ as claimed.
\end{proof}

\begin{remark}
    In \cite[Theorem~2.10]{Graham.Kreiman}, Graham--Kreiman give formulas for the Hilbert functions of local patches of Schubert varieties in a cominuscule generalized flag variety, thereby showing they are Hilbertian. Here is an alternative proof:  the varieties considered are cut out by homogeneous equations and thus fall under the purview of Theorem~\ref{thm:KL}.
\end{remark}

\begin{remark}
We expect that if $\Omega$ is a type-$A$ quiver locus with arbitrary orientation, then the coordinate ring $R_\Omega$ of $\Omega$ is Hilbertian unless $R_\Omega\cong \complexes$. Actually, we envision two possible proofs. The first 
is to use Theorem~\ref{thm:KL} combined with theorems of Kinser-Rajchgot \cite{Kinser.Rajchgot} that relate the quiver loci to Kazhdan-Lusztig ideals. The second is to use the Hilbert series formula for the loci from Kinser-Knutson-Rajchgot \cite{Kinser} combined with an analogue of our
first proof of Theorem~\ref{thm:main}. Both require enough notation and details to treat properly that we may do so elsewhere.
\end{remark}

\begin{remark}
    Marberg-Pawlowski carry out a study of \textit{skew-symmetric matrix Schubert varieties} in \cite{Marberg.Pawlowski}, developing a theory closely analogous to that of subword complexes for ordinary matrix Schubert varieties. Theorem~4.22 and Lemma~4.27 of their paper show that the coordinate rings of these varieties are isomorphic to $R_\Delta$ for a simplicial complex $\Delta$ homeomorphic to a ball or sphere (with a simple criterion to distinguish these cases). By Corollary~\ref{cor:SRHilbertian}, the Hilbertian rings are those where $\Delta$ is homeomorphic to a ball.
\end{remark}

\section*{Acknowledgements}
We thank Allen Knutson for suggestions that stimulated Section~\ref{sec:subword} after our original preprint was posted. We also thank Jenna Rajchgot and Reuven Hodges for helpful communications.
AS was partially supported by a Susan C.~Morosato IGL graduate student scholarship. 
AY was partially supported by a Simons Collaboration grant. This work was made possible in part by an NSF RTG in Combinatorics (DMS 1937241).

\end{document}